\theoremstyle{plain}
\newtheorem{theorem}{Theorem}[section]
\newtheorem{proposition}[theorem]{Proposition}
\theoremstyle{definition}
\newtheorem{definition}[theorem]{Definition}
\newtheorem*{notation}{Notation}
\newtheorem{remark}[theorem]{Remark}
\numberwithin{equation}{section}
\newcommand{\Cs}{\ensuremath{\mathrm{C}^*}}
\newcommand{\Csr}{\ensuremath{\mathrm{C}_r^*}}
\newcommand{\E}{\ensuremath{\mathcal{E}}}
\DeclareMathOperator{\re}{\R e}
\newcommand{\Fp}{\mathbb{F}_p}
\newcommand{\Q}{\mathbb{Q}}
\newcommand{\R}{\mathbb{R}}
\newcommand{\C}{\mathbb{C}}
\newcommand{\scal}[2]{\langle #1,#2\rangle}
\def\bn{\bar{n}}
\DeclareMathOperator{\F}{\mathcal{F}}
\DeclareMathOperator{\U}{\mathcal{U}}
\DeclareMathOperator{\I}{\mathcal{I}}
\newcommand{\Nbar}{\ensuremath{\bar{N}}}
\newcommand{\Xbar}{\ensuremath{\bar{X}}}
\newcommand{\Stz}{\mathcal{S}}
\begin{document}

\title[$\Cs$-algebraic intertwiners]{$\Cs$-algebraic intertwiners for degenerate principal series of special linear groups}
\author{Pierre Clare}
\address{Pierre Clare\\ The Pennsylvania State University\\ Department of Mathematics\\ McAllister Building\\ University Park, PA - 16802\\USA}
\email{clare@math.psu.edu}

\subjclass[2010]{22D25, 46L08, 22E46}
\keywords{Group $\Cs$-algebras, Hilbert modules, semisimple Lie groups, principal series representations, intertwining operators.}
\date{\today}

\setcounter{tocdepth}{3}

\begin{abstract}
We construct unitary intertwiners for degenerate $\Cs$-algebraic universal principal series of $\mathrm{SL}(n+1)$ over a local field by explicitely normalizing standard intertwining integrals a the level of Hilbert modules.
\end{abstract}
\maketitle


\section{Introduction}

The relation between the theory of unitary representations of topological groups and $\Cs$-algebras is a classical topic (see \cite{Dixmier}) that knew many developements over the years. An example of particular relevance here is the work of M. A. Rieffel who gave in \cite{RieffelIRC*A} a description of induced representations in terms of Hilbert modules.

In an effort to obtain elements of a $\Cs$-algebraic formulation of the representation theory of semisimple Lie groups, Rieffel's construction was adapted in \cite{PArtmodules} to describe parabolic induction, which resulted in the construction of Hilbert modules related to the principal series induced from a given association class of parabolic subgroups. After the results of generic irreducibility obtained in \cite{PArtmodules} in the spirit of F. Bruhat's theory, the next step in the direction of an accurate description of the $\Cs$-algebras related to a Lie group consists in developping the theory of intertwining operators. These objects were originally introduced by A. W. Knapp and E. M. Stein in \cite{KS1,KS2} (see also \cite{VoganWallach} and recently \cite{ClercEntrelac}) and play a central role in the theory, as they allow to decide when principal series representations are reducible but also relate to the densities in the Plancherel formula and the construction of complementary series.

The central piece in the classical study of intertwining operators is the normalization process, allowing to go from integral transformations analogous to Radon transforms and called \textit{standard intertwining integrals} to unitary intertwiners, that can often be expressed in terms of classical geometric transforms (see for instance \cite{Kobayashi_small_GL}, \cite{PasquOlaf}, \cite{PArtSpnC} for recent examples).

Standard intertwining integrals were studied in the context of Hilbert modules in \cite{PArtmodules} and a result of normalization was obtained in this framework for $\mathrm{SL}(2)$ in \cite{PC*entrelacSL2}, using Fourier transforms twisted by the non-trivial Weyl element. The point of the present paper is to extend this result to the case of maximally degenerate principal series of special linear groups: we will show how a Fourier transform defined on an appropriate space extends to a unitary intertwining operator between Hilbert modules and normalizes the standard intertwining integral in a sense that will be made precise.

The paper is organised as follows: Section \ref{secE(G/N)} is devoted to the description of degenerate principal series of $\mathrm{SL}(n+1)$ in terms of Hilbert modules. More precisely, universal $\Cs$-algebraic principal series are introduced in the discussion concluded by Definition \ref{defE(G/N)} and Proposition \ref{charactE(G/N)} characterises a useful submodule of functions. Section \ref{secintertw} starts with the study of the Radon transform defining the standard intertwining integral. Then we introduce a Fourier transform adapted to the situation and prove in Theorem \ref{thmnormal} that it extends to a unitary operator between Hilbert modules that normalizes the standard integral at the level of $\Cs$-algebraic principal series. Finally, we explain in Section \ref{secnonarchi} how the results extend to the case of non-archimedean local fields.

The results presented here are part of an ongoing joint research project with members of the MAPMO (University of Orléans): A. Alvarez, P. Julg and V. Lafforgue. This article greatly benefited from their help and comments.

\section{\texorpdfstring{$\Cs$-algebraic universal principal series}{C*-algebraic universal principal series}}\label{secE(G/N)}

\subsection{Structure and notations}

Let $F$ be the field $\R$ or $\C$ of real or complex numbers. If $x$ is a matrix with coefficients in $F$, the matrix obtained by conjugating all the entries of $x$ will be denoted by $\bar{x}$. The real matrix $x=\frac{1}{2}\left(x+\bar{x}\right)$ will be called the \textit{real part} of $x$ and denoted by $\re(x)$.

Troughout the article, $G_F$ will denote the group $\mathrm{SL}(n+1,F)$ of matrices of size $n+1$ with determinant $1$, for $n\geq1$. Let $\theta$ be the Cartan involution of $G_F$ defined by $\theta(g)={^t}\bar{g}^{-1}$. The subgroup $P_F$ of upper block-triangular matrices of type $(n,1)$ is a maximal parabolic subgroup of $G_F$. It admits a Langlands decomposition $P_F = L_F N_F$ with $\theta$-stable Levi component

\[L_F=\left\{\left[\begin{array}{ccc|c}&&&0\\&a&&\vdots\\&&&0\\\hline0&\cdots&0&\det(a)^{-1}\end{array}\right]\:,\: a\in\mathrm{GL}(n,F)\right\}\]

and unipotent radical \[N_F=\left\{\left[\begin{array}{ccc|c}&&&\\&I_n&&X\\&&&\\\hline0&\cdots&0&1\end{array}\right]\:,\: X\in F^n\right\}\] where $I_n$ denotes the identity matrix of size $n$, so that $P_F$ identifies with the semi-direct product $\mathrm{GL}(n,F)\ltimes F^n$. The opposite parabolic subgroup $\bar{P}_F$ is the image of $P_F$ under $\theta$. It decomposes as $\bar{P}_F=L_F \Nbar_F$, with $\Nbar_F={^t}N_F$.

The group $L_F\simeq\mathrm{GL}(n,F)$ is endowed with the Haar measure $d^\times a$ defined by \[d^\times a = \left|\det(a)\right|^{-n}da\] where $da$ denotes the restiction of the Lebesgue measure of $F^{n^2}$ and $|\cdot|$ is the usual absolute value in the real case and defined by $|z|=\bar{z}z$ in the complex case.

\subsection{\texorpdfstring{The homogeneous spaces $G/N$ and $G/\Nbar$}{The homogeneous spaces G/N and G/Nbar}}

The central result of this paper will rely on some analysis on the quotient space $G/N$ described in this section. To avoid confusion, group actions will be denoted using dots (\textit{e.g.} $a.b$), whereas matrix multiplications will be written without any symbol (\textit{e.g.} $ab$).

\begin{notation}The vector space of matrices with $p$ lines, $q$ columns and coefficients in $F$ is denoted by $\mathrm{M}_{p,q}(F)$ (simply $\mathrm{M}_p(F)$ if $p=q$) and we let $X_F=\mathrm{M}_{n+1,n}(F)$.\end{notation}

The groups $G_F$ and $L_F$ act on $X_F$ from the left and the right respectively by $g.x=gx$ and $x.a=xa$ for $g$ in $G_F$, $a$ in $\mathrm{GL}(n,F)$ and $x$ in $X_F$.


\begin{notation} The set of elements of rank $n$ in $X_F$ is denoted by $\mathrm{M}^\times_{n+1,n}(F)$.\end{notation}

This space is dense in $X_F$ and inheritates the actions of $G_F$ and $\mathrm{GL}(n,F)$. We denote by $dx$ the $G_F$-invariant measure obtained by restricting the Lebesgue measure of $\mathrm{M}_{n+1,n}(F)\simeq F^{n(n+1)}$. The right action of $\mathrm{GL}(n,F)$ on $X_F$ transforms the measure according to \begin{equation}\label{LmesX}\int_{X_F}f(x.a)\,dx=\left|\det\nolimits_F(a)\right|^{-(n+1)}\int_{X_F}f(x)\,dx.\end{equation}

\begin{proposition}\label{G/N}
The homogeneous space $G_F/N_F$ identifies with $\mathrm{M}^\times_{n+1,n}(F)$ as a topological and measured space. The identification is $(G_F,L_F)$-equivariant.
\end{proposition}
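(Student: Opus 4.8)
The plan is to realize the stated identification as the orbit map of $G_F$ acting on $\mathrm{M}^\times_{n+1,n}(F)$ through a suitable base point. I would take $x_0 \in \mathrm{M}^\times_{n+1,n}(F)$ to be the matrix whose upper $n\times n$ block is $I_n$ and whose bottom row is zero. First I would compute $\mathrm{Stab}_{G_F}(x_0)$: the equation $g.x_0 = x_0$ says precisely that the first $n$ columns of $g$ are the standard basis vectors $e_1,\dots,e_n$ of $F^{n+1}$, so $g = \left[\begin{smallmatrix} I_n & v \\ 0 & d\end{smallmatrix}\right]$ with $v \in F^n$ and $d \in F$; since $\det g = d = 1$, this forces $g \in N_F$, and conversely $N_F$ visibly fixes $x_0$. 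Hence the orbit map $\pi\colon g \mapsto g.x_0$ descends to an injection $G_F/N_F \hookrightarrow \mathrm{M}^\times_{n+1,n}(F)$. Surjectivity is immediate: a matrix $x$ of rank $n$ has linearly independent columns, which may be completed to a basis of $F^{n+1}$ by some vector $w$; replacing $w$ by $w/\det[x\mid w]$ yields $g\in G_F$ with $g.x_0 = x$. This already gives the set-theoretic bijection $G_F/N_F \cong \mathrm{M}^\times_{n+1,n}(F)$, and left $G_F$-equivariance is built into it.

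Next I would upgrade this to a homeomorphism. The cleanest route is differential: $d\pi_e$ sends $\xi\in\mathfrak{sl}(n+1,F)$ to the block formed by its first $n$ columns, and these can be prescribed arbitrarily (the last column always absorbs the trace-zero constraint), so $d\pi_e$ is onto $T_{x_0}\mathrm{M}^\times_{n+1,n}(F)$; by equivariance $\pi$ is a submersion everywhere, hence the induced smooth bijection $G_F/N_F \to \mathrm{M}^\times_{n+1,n}(F)$ is a submersion between manifolds of the same dimension $n(n+1)\cdot\dim_{\R}F$, hence a diffeomorphism. Alternatively one can bypass the differential and invoke the classical fact that a continuous transitive action of a locally compact, $\sigma$-compact group on a locally compact Hausdorff space induces a homeomorphism $G/G_x\cong X$; both $G_F$ and $\mathrm{M}^\times_{n+1,n}(F)$ satisfy the hypotheses.

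For the measured-space assertion I would use that $G_F$ is unimodular, being semisimple, and that $N_F$ is unimodular, being abelian (from its description it is isomorphic to the additive group $F^n$); consequently $G_F/N_F$ carries a $G_F$-invariant Radon measure, unique up to a positive scalar. On the other side, $dx$ is $G_F$-invariant since left translation by $g$ acts on $\mathrm{M}_{n+1,n}(F)$ diagonally on the $n$ columns, with Jacobian $\det(g)^n=1$ (and the corresponding module equal to $1$ when $F=\C$). Transporting $dx$ through the homeomorphism above therefore produces a $G_F$-invariant Radon measure on $G_F/N_F$, which must be the canonical one up to normalization; this is the measured-space identification.

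Finally I would check $L_F$-equivariance on the right. Because $P_F = L_FN_F$ is a Levi decomposition, $L_F$ normalizes $N_F$, so $(gN_F).a := (ga)N_F$ defines a right $L_F$-action on $G_F/N_F$. Writing $a\in L_F$ with $n\times n$ block $a_0\in\mathrm{GL}(n,F)$, a one-line computation gives $a.x_0 = x_0 a_0$, whence $(ga).x_0 = g(x_0a_0) = (g.x_0)a_0$: this is exactly the compatibility of $\pi$ with the two right actions under the isomorphism $L_F\cong\mathrm{GL}(n,F)$, $a\mapsto a_0$. Almost everything here is routine linear algebra; the one point that truly needs a theorem rather than a computation is passing from the set bijection to a homeomorphism — in the smooth formulation this is the submersion property of $\pi$, which I regard as the technical heart of the argument.
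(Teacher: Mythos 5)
Your proof is correct and follows essentially the same route as the paper: the orbit map $g\mapsto g.x_0$ with the same base point, the stabilizer computation identifying $N_F$, surjectivity by completing a rank-$n$ matrix to a determinant-one basis, uniqueness of the $G_F$-invariant measure on $G_F/N_F$ combined with $G_F$-invariance of Lebesgue measure, and the relation $a.x_0=x_0a_0$ for $L_F$-equivariance. The only difference is that you spell out the homeomorphism step (via the submersion argument or the open-mapping theorem for transitive actions of locally compact $\sigma$-compact groups), which the paper passes over with a brief remark on compatibility of topologies.
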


\begin{proof} The stabiliser subgroup in $G_F$ of $x_0=\left[\begin{array}{ccc}&&\\&I_n&\\&&\\\hline0&\cdots&0\end{array}\right]$ is $N_F$, so the map $b:G_F\rightarrow X_F\;,\;g\mapsto g.x_0$ identifies $G_F/N_F$ with $b(G)=G.x_0$ in a $G_F$-equivariant way. Since $L_F$ normalizes $N_F$ and \[\left[\begin{array}{ccc|c}&&&0\\&a&&\vdots\\&&&0\\\hline0&\ldots&0&\det(a)^{-1}\end{array}\right]x_0 = \left[\begin{array}{ccc}&&\\&a&\\&&\\\hline0&\cdots&0\end{array}\right] = x_0.a\] for any $a$, it is also $L$-equivariant.

To identify the image of $b$, we observe that it is the map that extracts the left block of size $(n+1,n)$ of a matrix in $\mathrm{SL}(n+1,F)$, that is
\[b:\left[\begin{array}[c]{ccc|c}&&&*\\&^{\displaystyle u}&&\vdots\\&&&*\end{array}\right]\longmapsto u\]
for $u\in X_F$, so $b(g)$ has maximal rank for all $g$. Surjectivity follows from the fact that any linearly independant family may be completed into a basis with determinant $1$. Finally, $b$ is compatible with the topologies induced on $G_F$ and $\mathrm{M}_{n+1,n}^\times(F)$ by the standard ones on $\mathrm{M}_{n+1}(F)$ and $X_F$, and $G_F/N_F$ admits essentially one $G_F$-invariant measure, which corresponds \textit{via} $b$ to the restriction of the Lebesgue measure to $X_F$.
\end{proof}

From now on, we will use freely the identification established in the above result and identify any element of $L_F$ with the corresponding element of $\mathrm{GL}(n,F)$.

An analogous description can be obtained for the `opposite' quotient $G_F/\Nbar_F$ identified with $\Xbar_F=\mathrm{M}_{n,n+1}(F)$ through the map $\bar{b}:g\longmapsto y_0g^{-1}$ where the matrix $y_0=\left[\begin{array}{ccc|c}&&&0\\&I_n&&\vdots\\&&&0\end{array}\right]$ has stabiliser $\Nbar_F$ and $\Xbar_F$ carries actions of $G_F$ and $L_F$ respectively given by $g.y=yg^{-1}$ and $y.a=a^{-1}y$ for $g$ in $G_F$ and $a$ in $L_F$.

\subsection{\texorpdfstring{The modules and $\E(G/N)$ and $\E(G/\Nbar)$}{The modules and E(G/N) and E(G/Nbar)}}\label{E(G/N)sec}

From now on, we will systematically omit the subscript $_F$. The set of smooth functions with compact support on a space $T$ will be denoted by $C_c^\infty(T)$, and the Schwartz space of functions on $T$ all of whose derivatives are rapidly decreasing by $\Stz(T)$.

In order to study intertwining operators in a $\Cs$-algebraic context, we will work in the framework of universal principal series, introduced in \cite{PArtmodules}. The basic objects will be Hilbert modules defined as completions of $C_c^\infty(G/N)$ and $C_c^\infty(G/\Nbar)$ respectively.

\begin{notation} If $f$ is a function of the variable $x$ and $m$ a group element such that $mx$ makes sense, the right translate of $f$ by $m$ is denoted by $f^m:x\mapsto f(xm)$. Similarly, ${^m}f(x)=f(mx)$.\end{notation}

The actions of $G$ and $L$ can be defined for functions on the larger space $X$. Namely, for a function $f$ defined on $X$, and elements $g$ of $G$ and $a$ of $L$, we let
\begin{equation}\label{actionG}g.f(x)=f(g^{-1}x)\end{equation}
\begin{equation}\label{actionL}f.a(x) = \left|\det(a)\right|^{-\frac{n+1}{2}}f(xa^{-1}) = \left|\det(a)\right|^{-\frac{n+1}{2}}f^{a^{-1}}(x),\end{equation}
which integrates to
\begin{equation}\label{actionLint}\tag{\ref{actionL}'}f.\varphi(x) = \int_{\mathrm{GL}(n,F)}f(xa^{-1})\varphi(a)\left|\det(a)\right|^{-\frac{n+1}{2}}d^\times a\end{equation}
for any $\varphi$ integrable over $\mathrm{GL}(n,F)$.

Finally we consider the pairing
\begin{equation}\label{scal}\scal{f}{h}_X(a) = \left|\det(a)\right|^{\frac{n+1}{2}}\int_X\overline{f(x)}h(xa)\,dx = \left|\det(a)\right|^{\frac{n+1}{2}}\scal{f}{h^a}_{L^2}\end{equation}
for suitable functions $f$ and $h$ defined on $X$, where $\scal{\cdot}{\cdot}_{L^2}$ denotes the usual inner product for square-integrable functions. If $f$ and $h$ are smooth and compactly supported on $G/N\subset X$, then $\scal{f}{h}_X$ is smooth and compactly supported on $L$ and the above formulas define on $C^\infty_c(G/N)$ a structure of pre-Hilbert module over $C^\infty_c(L)$. The scalar factor involved in \eqref{actionL} shows that $f$ is actually considered as a half-density over $G/N$, and \eqref{scal} then appears as a natural pairing.

The above formulas are special cases of the ones involved in the general construction of parabolic induction modules described in \cite{PArtmodules}. Therefore, they can be used to define a Hilbert module structure as follows:

\begin{definition}\label{defE(G/N)}The Hilbert module obtained from $C^\infty_c(G/N)$ by extending the action \eqref{actionLint} to the reduced $\Cs$-algebra of $\mathrm{GL}(n,F)$, then completing with respect to the norm induced by \eqref{scal} is called the \emph{reduced $\Cs$-algebraic universal principal series} associated to the pair $(G,P)$ and denoted by $\E(G/N)$.\end{definition}

As explained in \cite{PArtmodules}, this construction generalises Rieffel's construction of induction modules \cite{RieffelIRC*A}. It provides a global picture of principal series induced from $P$ in the sense that there exists a $G$-equivariant unitary specialisation map between the tensor product of $\E(G/N)$ with the carrying space of any appropriate element of $\widehat{L}_r$ and the corresponding principal series representation \cite[Corollary 3.5]{PArtmodules}.

The next result allows us to consider Schwartz functions on $X$ as a subspace of $\E(G/N)$. It was suggested to me by Vincent Lafforgue.

\begin{proposition}\label{charactE(G/N)}
The inclusion map $\iota:C_c^\infty(\mathrm{M}^\times_{n+1,n}(F))\longrightarrow\E(G/N)$ extends continuously to $\Stz(X)$.
\end{proposition}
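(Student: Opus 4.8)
The statement is that the inclusion $\iota$ from compactly supported smooth functions on the rank-$n$ locus extends continuously to the full Schwartz space $\Stz(X)$ of $X = \mathrm{M}_{n+1,n}(F)$. Since $C_c^\infty(\mathrm{M}^\times_{n+1,n}(F))$ is dense in $\Stz(X)$ (the complement of the rank-$n$ locus is a proper Zariski-closed subset, hence of measure zero, and one can cut off near it), it suffices to prove a continuity estimate: there is a continuous seminorm $p$ on $\Stz(X)$ such that
\[
\|\iota(f)\|_{\E(G/N)} \leq p(f)
\]
for every $f \in C_c^\infty(\mathrm{M}^\times_{n+1,n}(F))$. By definition of the Hilbert module norm, $\|\iota(f)\|_{\E(G/N)}^2 = \|\scal{f}{f}_X\|_{\Csr(\mathrm{GL}(n,F))}$, and since $\scal{f}{f}_X$ will be a (continuous, suitably decaying) function on $L = \mathrm{GL}(n,F)$, the first move is to bound the $\Csr$-norm of this function by its $L^1$-norm: $\|\varphi\|_{\Csr(\mathrm{GL}(n,F))} \leq \|\varphi\|_{L^1(\mathrm{GL}(n,F), d^\times a)}$ for $\varphi$ a positive-definite (or just integrable) function. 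So the whole problem reduces to estimating
\[
\int_{\mathrm{GL}(n,F)} \left|\scal{f}{f}_X(a)\right| d^\times a = \int_{\mathrm{GL}(n,F)} \left|\det(a)\right|^{\frac{n+1}{2}} \left|\int_X \overline{f(x)} f(xa)\, dx\right| \frac{da}{\left|\det(a)\right|^n}
\]
by a Schwartz seminorm of $f$.

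**Carrying it out.** The inner integral $\int_X \overline{f(x)} f(xa)\, dx$ is a kind of matrix-coefficient/autocorrelation of $f$ along the right $\mathrm{GL}(n,F)$-action. The key structural fact is the change of variables \eqref{LmesX}: writing $x = y a^{-1}$ turns $\int_X \overline{f(x)} f(xa)\,dx$ into $\left|\det(a)\right|^{n+1}\int_X \overline{f(ya^{-1})} f(y)\,dy$, which exhibits a symmetry but, more usefully, one should think of $x \mapsto xa$ as acting on the $n$ columns of $x$, each an element of $F^{n+1}$, by the same invertible matrix $a$. The plan is to decompose this using the polar/Cartan decomposition $a = k_1 \, \mathrm{diag}(t_1, \dots, t_n)\, k_2$ of $\mathrm{GL}(n,F)$, under which $d^\times a$ factors through the Weyl integration formula with a Jacobian that is a product of factors like $\prod_{i<j}|t_i^2 - t_j^2|$ (real case) or $|t_i^2 - t_j^2|^2$ (complex case), and the relevant exponent $\left|\det a\right|^{\frac{n+1}{2} - n} = \prod_i |t_i|^{-\frac{n-1}{2}}$. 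On the radial part, $f(xa)$ involves scaling the columns, and rapid decay of $f$ in all directions gives, for every $N$, a bound
\[
\left|\int_X \overline{f(x)} f(xa)\, dx\right| \;\leq\; C_N \, p_N(f)^2 \prod_{i=1}^n \frac{1}{(1 + |t_i|)^N}\cdot(\text{correction when some }t_i\to 0),
\]
and one checks that for $N$ large enough the product of this with the Jacobian and the character is integrable over $a$ near infinity; near the walls $t_i = t_j$ the Jacobian vanishes or is locally integrable, and near $t_i \to 0$ the factor $|t_i|^{-\frac{n-1}{2}}$ together with the measure $|t_i|^{?}\,dt_i$ coming from $da$ (the singular values near $0$ contribute a positive power) must be checked to be integrable — this is where the hypothesis $n \geq 1$ and the specific half-density normalization pull their weight. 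Assembling, $\int_{\mathrm{GL}(n,F)}|\scal{f}{f}_X(a)|\,d^\times a \leq C \, p(f)^2$ with $p$ a Schwartz seminorm, which is the desired estimate.

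**The main obstacle.** The genuine difficulty is the behavior of the integral near the singular locus of $\mathrm{GL}(n,F)$ inside $\mathrm{M}_n(F)$ — i.e., as $a$ degenerates (some singular value $\to 0$), which corresponds exactly to $xa$ sliding off the rank-$n$ locus. The Schwartz function $f$ need not vanish on the lower-rank stratum, so unlike in the compactly supported case there is no a priori cutoff, and one must prove that the measure $d^\times a = |\det a|^{-n}\,da$ weighted by $|\det a|^{\frac{n+1}{2}}$ is integrable against the autocorrelation $\int_X \overline{f}\,f(\cdot a)$ all the way down to $\det a = 0$. The clean way is to use the substitution from \eqref{LmesX} to move the degeneration onto the $x$-variable and bound things by $\int_X \int_X |f(x)|\,|f(y)|\,K(x,y)\,dx\,dy$ for an explicit kernel $K$ coming from the Jacobian, then verify $\int K(x,\cdot)$ (or $\int K(\cdot,y)$) is a bounded-plus-decaying function, using that the non-maximal-rank locus has codimension $\geq 2$ in $X$ when $n \geq 1$ so the singular integral converges; rapid decay of $f$ then controls the behavior at infinity. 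This codimension count, and matching it against the exponent $\tfrac{n+1}{2} - n = \tfrac{1-n}{2}$, is the crux, and everything else (density, $\Csr \leq L^1$, Cartan decomposition bookkeeping) is routine.
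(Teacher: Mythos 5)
Your core reduction --- bound $\|\scal{f}{f}_X\|_{\Csr(L)}$ by $\|\scal{f}{f}_X\|_{L^1(L)}$ and then prove integrability over $\mathrm{GL}(n,F)$ --- breaks down for $n\geq 3$, because $\scal{f}{f}_X$ is simply not in $L^1(L)$ in general. The decay of the inner product is governed by the (essentially sharp) estimate $\left|\scal{f}{f}_X(k_1ak_2)\right|\leq C\prod_i\left|\min(a_i,a_i^{-1})\right|^{\frac{n+1}{2}}$: for instance along $a=\mathrm{diag}(t,1,\dots,1)$, $t\to\infty$, a direct scaling computation gives $\scal{f}{f}_X(a)\asymp t^{-\frac{n+1}{2}}$ with a generically nonzero constant, while the Haar measure of the corresponding shells in the Cartan decomposition grows like $e^{2\rho\log a}\asymp t^{\,n-1}$ (equivalently, in polar coordinates the Jacobian $\prod_{i<j}|t_i^2-t_j^2|$ combined with $|\det a|^{\frac{n+1}{2}-n}$ produces $t^{\frac{n-5}{2}}\,dt$ near infinity and $\epsilon^{-\frac{n-1}{2}}\,d\epsilon$ near the singular locus). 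These are integrable only for $n\leq 2$; for $n\geq 3$ the $L^1$ norm diverges both as $a\to\infty$ and as $\det a\to 0$, and no choice of Schwartz seminorm can fix this since the obstruction is the measure growth, not the size of $f$. The codimension-two count on the rank-deficient stratum of $X$ does not rescue the argument, because the divergence happens in the $a$-integral over $L$ against $d^\times a$. This is exactly why the paper does not use the $\Csr\leq L^1$ bound: it shows instead that \eqref{estimscal} places $\scal{f}{h}_X$ in Lafforgue's generalised Harish-Chandra Schwartz space, i.e. it satisfies $|\varphi(k_1ak_2)|\leq C_p\,e^{-\rho\log a}(1+\|\log a\|)^{-p}$ for all $p$ (the point being $e^{-\rho\log a}\geq\prod_i|\min(a_i,a_i^{-1})|^{\frac{n+1}{2}}$ with an exponential margin), and then invokes the fact that this space is a Banach subalgebra of $\Csr(L)$ --- a genuinely non-$L^1$ mechanism (Harish-Chandra's weak inequality / Kunze--Stein-type estimates). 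That substitute for your first step is the missing idea.

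A secondary gap is the density claim: $C_c^\infty(\mathrm{M}^\times_{n+1,n}(F))$ is \emph{not} dense in $\Stz(X)$ for the Schwartz topology (any $f$ with $f(x_0)\neq 0$ at a rank-deficient $x_0$ stays at uniform distance $|f(x_0)|$ from functions supported in the open rank-$n$ locus), so "extend by density" does not apply as stated. What must be shown --- and what the paper proves with the cutoffs $f_m=f\chi_m$ --- is that every $f\in\Stz(X)$ is the limit of elements of $C_c^\infty(G/N)$ \emph{in the Hilbert-module norm}, which again requires the uniform Harish-Chandra-type estimates on $\scal{f-f_m}{f-f_m}_X$ rather than a topological density statement. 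Your approach as written would go through only for $n=1,2$; the paper's weighted-estimate route is what makes the result uniform in $n$.
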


\begin{proof}
According to \cite[Chap.VII \S3]{KnappBeyond}, we can use the $KAK$ decomposition to write any element of $\mathrm{GL}(n,F)$ as a product $k_1ak_2$, where $k_1$ and $k_2$ are elements in a maximal compact subgroup of $\mathrm{GL}(n,F)$ and $a$ is a diagonal matrix with positive entries $a_1,\ldots,a_n$. Then for $f$ and $h$ in $\Stz(X)$, we prove that there exists a constant $C$ such that:
\begin{equation}\label{estimscal}\left|\scal{f}{h}_X\left(k_1ak_2\right)\right|\leq C\prod_{i=1}^n\left|\min(a_i,a_i^{-1})\right|^{\frac{n+1}{2}}.\end{equation}
By polarisation, we may assume that $f=h$. Moreover, since $K$ is compact it is enough to treat the case $k_1=k_2=1$. Identifying $X$ to $\left(F^{n+1}\right)^n$ \textit{via} $x=(x_1,\ldots,x_n)$, the assumption that $f$ is rapidly decreasing implies the existence of positive bounded integrable functions $f_i$ on $F^{n+1}$ such that $\left|f(x)\right|\leq f_1(x_1)\ldots f_n(x_n)$. Observe that $a$ acts on $x$ by multiplying each $x_i$ by $a_i$, so that it is enough to consider the case of one $f_i$. One has, for $f$ positive bounded and integrable on $F^{n+1}$ and $a>0$, \begin{eqnarray*}\left|a\right|^{\frac{n+1}{2}}\int_{F^{n+1}}f(x)f(ax)\,dx&\leq&\left|a\right|^{\frac{n+1}{2}}\left\|f\right\|_\infty\int_{F^{n+1}}f(ax)\,dx\\&\leq&\left\|f\right\|_\infty\left\|f\right\|_1|a|^{-\frac{n+1}{2}}.\end{eqnarray*} Since \[\left|a\right|^{\frac{n+1}{2}}\int_{F^{n+1}}f(x)f(ax)\,dx=\left|a\right|^{-\frac{n+1}{2}}\int_{F^{n+1}}f(a^{-1}x)f(x)\,dx,\] the same computation yields \[\left|a\right|^{\frac{n+1}{2}}\int_{F^{n+1}}f(x)f(ax)\,dx\leq\left\|f\right\|_\infty\left\|f\right\|_1|a|^{\frac{n+1}{2}},\] so that \[\left|a\right|^{\frac{n+1}{2}}\int_{F^{n+1}}f(x)f(ax)\,dx\leq\left\|f\right\|_\infty\left\|f\right\|_1\left|\min\left(a,a^{-1}\right)\right|^{\frac{n+1}{2}},\] which implies \eqref{estimscal}. As a consequence, we will prove that $\scal{f}{h}_X$ belongs to the reduced $\Cs$-algebra of $\mathrm{GL}(n,F)$.

Following \cite[\S4.4]{LafforgueInventiones}, we consider the generalised Harish-Chandra Schwartz space associated to a real reductive group $L$ consisting of functions $F$ on $L$ such that for any $p$, there exists a constant $C_p$ such that the inequality \begin{equation}\label{schwartzm}\left|F(k_1ak_2)\right|\leq C_p\frac{e^{-\rho\log a}}{\left(1+\left\|\log a\right\|\right)^p}\end{equation} is satisfied for all $k_1ak_2$ in the $KAK$ decomposition of $L$, where $\rho$ denotes the half sum of positive roots for $(\mathfrak{l}:\mathfrak{a})$. We will use the fact, whose proof is recalled in \cite{LafforgueInventiones}, that for $p$ large enough, there exists $C_p$ such that \eqref{schwartzm} defines a norm that yields a Banach algebra which is a subalgebra of $\Csr(G)$. In the case at hand, denoting $d_F=\dim_\R(F)$, one has \begin{eqnarray*}2\rho\log a&=&\sum_{i=1}^n(n+1-2i)d_F\log a_i\\&=&(n-1)d_F\log a_1+(n-3)d_F\log a_2+\ldots-(n-1)d_F\log a_n,\end{eqnarray*} so that 
\begin{align}\label{ineq}e^{-\rho\log a}&=\prod_{i=1}^n \left|a_i\right|^{i-\frac{n+1}{2}}=\left(\left|a_1\right|^{-1}\right)^{\frac{n-1}{2}} \left(\left|a_2\right|^{-1}\right)^{\frac{n-3}{2}}\ldots \left|a_{n-1}\right|^\frac{n-3}{2}\left|a_n\right|^\frac{n-1}{2}\\
&\geq\prod_{i=1}^n\left|\min\left(a_i,a_i^{-1}\right)\right|^{\frac{n-1}{2}}\geq\prod_{i=1}^n\left|\min\left(a_i,a_i^{-1}\right)\right|^{\frac{n+1}{2}}\nonumber\end{align}
since $\frac{n+1}{2}>\frac{n-1}{2}$ and $\min\left(a_i,a_i^{-1}\right)\leq1$ for all $i$. Therefore, \eqref{estimscal} implies that $\scal{f}{h}_X$ satisfies \eqref{schwartzm} for any $p$, hence defines an element of $\Csr\left(\mathrm{GL}(n,F)\right)$.

Next we prove that functions in $\Stz(X)$ can be approximated by elements of $C^\infty_c(G/N)$ for the $\Csr\left(\mathrm{GL}(n,F)\right)$-valued norm used to build the Hilbert module $\E(G/N)$. Let $\mu$ denote the Lebesgue measure of $X$. Since $\mu\left(X\setminus\mathrm{M}^\times_{n+1,n}(F)\right)=0$, there exists a sequence $\left\{T_m\right\}_{m\geq1}$ of open subsets of $X$ containing $X\setminus\mathrm{M}^\times_{n+1,n}(F)$, satisfying $\overline{T_{m+1}}\subset T_m$ and such that $\lim\limits_{m\to\infty}\mu(T_m)=0$. Let $\left\{B_m\right\}_{m\geq1}$ be the family of closed balls centered at the origin in $X$ with radius $m$ for the usual norm and consider a sequence $\left\{\chi_m\right\}_{m\geq1}$ of compactly supported non-negative smooth functions on $X$ such that \[\begin{array}{ccl}\chi_m&\equiv&1\quad\text{on $B_m\cap\left(X\setminus T_m\right)$}\\&\equiv&0\quad\text{ on $T_{m+1}$}.\end{array}\]
Let $f$ be in $\Stz(X)$. For all $m$, the pointwise product $f_m=f.\chi_m$ is compactly supported away from singular matrices, hence defines a function in $C^\infty_c(G/N)$. We will prove that the $\Csr\left(\mathrm{GL}(n,F)\right)$-valued norm of $f-f_m$ converges to 0, so that $f$ can be viewed as the limit in $\E(G/N)$ of the sequence $\left\{f_m\right\}_{m\geq1}$. To do so, we denote $\varphi_m=\scal{f-f_m}{f-f_m}_X$. By the same argument used above, \eqref{estimscal} implies that $\varphi_m$ belongs to the generalised Harish-Chandra Schwartz space of $\mathrm{GL}(n,F)$. Furthermore, \eqref{estimscal} and \eqref{ineq} imply that \[\left|\varphi_m\left(k_1ak_2\right)\frac{\left(1+\left\|\log a\right\|\right)^p}{e^{-\rho\log a}}\right|\leq C\left(1+\left\|\log a\right\|\right)^p\prod_{i=1}^n\left|\min\left(a_i,a_i^{-1}\right)\right|\] for some constant $C$ that can be chosen independently of $m$ by the construction of $f_m$. It follows that for any $\varepsilon>0$ there is a compact $Z$ in $\mathrm{GL}(n,F)$ such that \[\sup_{k_1ak_2\notin Z}\left|\varphi_m\left(k_1ak_2\right)\frac{\left(1+\left\|\log a\right\|\right)^p}{e^{-\rho\log a}}\right|\leq\varepsilon.\] By definition of $f_m$, the sequence $\left\{\varphi_m\right\}_{m\geq1}$ converges to 0 uniformly on any compact subset of $\mathrm{GL}(n,F)$ such as $Z$. Therefore, \[\sup_{k_1ak_2\in \mathrm{GL}(n,F)}\left|\varphi_m\left(k_1ak_2\right)\frac{\left(1+\left\|\log a\right\|\right)^p}{e^{-\rho\log a}}\right|\leq\varepsilon\] for $m$ large enough, so that $\varphi_m$ converges to 0 in the generalised Harish-Chandra Schwartz space of $\mathrm{GL}(n,F)$, hence in $\Csr\left(\mathrm{GL}(n,F)\right)$, which concludes the proof.\end{proof}

\begin{remark}\label{remE(G/N)}The advantage of $\Stz(X)$ compared to $C^\infty_c(G/N)$ as a submodule of $\E(G/N)$ is the fact that, unlike compactness of the support, the properties of asymptotic decay of elements in $\Stz(X)$ will be preserved by the Fourier transform used in the normalization process of Section \ref{normalsubsec}, thus allowing to properly define an operator between Hilbert modules.\end{remark}

\begin{remark}
Starting from the pre-Hilbert module structure on $C^\infty_c(G/N)$, it is possible to consider various completions, for instance with respect to $L^1(L)$ or the full $\Cs$-algebra $\Cs(L)$. However, the estimate \eqref{estimscal} on which the proof of Proposition \ref{charactE(G/N)} relies does not allow to exhibit $\Stz(X)$ as a submodule for these norms.
\end{remark}

The universal principal series $\E(G/\Nbar)$ associated to the pair $(G,\bar{P})$ can be described in the same way. More precisely, all the arguments of the above discussion hold, working on $\Xbar$ and using the following formulas for the actions and the inner product:

\begin{equation}\label{actionGbar}\tag{$\overline{\mbox{\ref{actionG}}}$}g.f(y)=f(yg),\end{equation}

\begin{equation}\label{actionLbar}\tag{$\overline{\mbox{\ref{actionL}}}$}f.a(y) = \left|\det(a)\right|^{\frac{n+1}{2}}f(ay) = \left|\det(a)\right|^{\frac{n+1}{2}}\;{^a}f(y),\end{equation}

\begin{equation}\label{actionLbarint}\tag{\ref{actionLbar}'}f.\varphi(y) = \int_{\mathrm{GL}(n,F)}f(ay)\left|\det(a)\right|^{\frac{n+1}{2}}d^\times a,\end{equation}

\begin{equation}\label{scalbar}\tag{$\overline{\mbox{\ref{scal}}}$}\scal{f}{h}_{\Xbar}(a) = \left|\det(a)\right|^{-\frac{n+1}{2}}\int_{\Xbar}\overline{f(y)}h(a^{-1}y)\,dy = \left|\det(a)\right|^{-\frac{n+1}{2}}\scal{f}{{^{a^{-1}}}h}_{L^2},\end{equation}
for $f$ and $h$ functions on $\Xbar$, $g$ in $G$, $a$ in $L$ and $\varphi$ in $C_c(L)$. As in Proposition \ref{charactE(G/N)}, the inclusion map $\bar{\iota}:C_c^\infty(\mathrm{M}^\times_{n,n+1}(F))\longrightarrow\E(G/\Nbar)$ extends to $\Stz\left(\Xbar\right)$.

\section{Intertwining operators}\label{secintertw}

The purpose of this section is to study and normalize standard intertwining integrals at the level of the Hilbert modules introduced above.

\subsection{Standard intertwining integrals}

The study of intertwiners between principal series induced from $P$ and $\bar{P}$ respectively, relies on the construction of operators transforming $N$-invariant functions into $\Nbar$-invariant ones (see \cite{KS1,KS2,Knapp1}). Working formally, that is outside of convergence considerations, one is naturally led to consider integrals of the form \begin{equation}\label{I}\I(f)\left(g\Nbar\right)=\int_{\Nbar}f(g\bn)\,d\bn.\end{equation}
It was observed (see \textit{e.g.} \cite{KobayashiMano_Schrodinger}) that such operators can be interpreted as Radon transforms and understood in the context of double fibrations as in \cite{Helgason_Radon}. The present situation is described by the diagram
\begin{equation}\label{doublefib}\xymatrix{&G\ar[dr]^{p_{\Nbar}}\ar[dl]_{p_N}&\\G/N&&G/\Nbar}\end{equation}
where $p_N$ and $p_{\Nbar}$ denote the natural projections. Lemma 6.2 in \cite{PArtmodules} implies that \eqref{I} defines a map $\I:C_c(G/N)\longrightarrow C(G/\Nbar)$ which does not extend to an operator between Hilbert modules, the latter fact being directly related to reducibility phenomena occurring in the principal series \cite[Appendix]{PC*entrelacSL2}.

In the notations of Diagram \eqref{doublefib}, the standard intertwining integral can be written as \[\I(f)(y)=\int_{p_N\left(p_{\Nbar}^{-1}(y)\right)}f(x)\,dx.\] In order to study this integral at the level of Hilbert modules, it will be convenient to work in $X$ and $\Xbar$ rather than $G/N$ and $G/\Nbar$, so \eqref{doublefib} will be replaced by
\begin{equation}\tag{\ref{doublefib}'}\xymatrix{&G\ar[dr]\ar[dl]&\\X&&\Xbar\;.}\end{equation}
First, we observe that two elements $x$ and $y$ in $X$ and $\Xbar$ come from the same $g$ in $G$ if and only if $x=gx_0$ and $y=y_0g^{-1}$, with the notations used in the proof of Proposition \ref{G/N}. Denoting by $\delta$ the Dirac distribution on $\mathrm{M}_n(F)$, it follows that the integral operator $\I$ is given by the distributional kernel $k_{\I}$ defined on $X\times\Xbar$ by \[k_{\I}(x,y)=\delta(yx - I_n),\] in the sense that \[\I(f)(y)=\int_Xk_{\I}(x,y)f(x)\,dx\] for any suitable function $f$, that is \begin{equation}\label{defIker}\I(f)(y)=\int_X\delta(yx - I_n)f(x)\,dx.\end{equation}

The following properties of $k_{\I}$ will be useful in the normalization process studied in the next section. Formulas \eqref{actionL} and \eqref{actionLbar} imply that $k_{\I}$ is invariant under the diagonal $L$-action: \begin{equation}k_{\I}(x.a,y.a)=k_{\I}(x,y)\end{equation} for $a$ in $\mathrm{GL}_n(F)$. Moreover, the homogeneity property of the Dirac distribution implies that \begin{equation}\label{kIhomog}k_{\I}(x.a,y)=\left|\det(a)\right|^{-(n+1)}\delta(yx-a).\end{equation}

Our main purpose will be to construct a $G$-equivariant unitary operator between Hilbert modules that differs from $\I$ only by a convolution over $L$. In order to state a more precise definition, let us introduce one more notation.

\begin{definition}
If $T$ is a function on $L$, the operator $C_T$ of convolution by $T$ acts on a function $f$ on $G/N$ by \[C_T(f)(x)=f*_L T(x)=\int_L f.a(x)T(a)\,da.\]
\end{definition}

In the present situation, \eqref{actionLint} implies that \begin{equation}\label{convolT}C_T(f)(x)=\int_{\mathrm{GL}(n,F)}f\left(xa^{-1}\right)\left|\det(a)\right|^{-\frac{n+1}{2}}T(a)\,d^\times a.\end{equation}

\begin{definition}\label{normaldef}A bounded operator $\U:\E(G/N)\longrightarrow\E(G/\Nbar)$ is said to \emph{normalize} the standard intertwining integral $\I$ defined by \eqref{defIker} if \begin{enumerate}[($i$)]
\item it is unitary and $G$-equivariant;
\item there exists a function $\gamma$ on $L$ such that $\left(\I\circ C_\gamma\right)(f)=\U(f)$ for $f$ in a dense subspace of $\E(G/N)$.
\end{enumerate}
\end{definition}

\begin{remark}\label{remSL2}Definition \ref{normaldef} above shoud be compared with Definition 3.1 in \cite{PC*entrelacSL2} where normalization is defined for standard intertwining integrals associated to Weyl elements.\end{remark}

\subsection{\texorpdfstring{The Fourier transform on $X$}{The Fourier transform on X}}

The use of the Fourier transform on matrix spaces to study principal series representations goes back to the work of E. M. Stein \cite{SteinSLn}. Let us briefly collect here the main properties of the integral operator that will give rise to the $\Cs$-algebraic intertwiner studied in the next section.

The transform defined by 

\begin{equation}\label{F}\F f(y)=\int_Xf(x)e^{-2i\pi\re \left(\mathrm{Tr}(yx)\right)}dx\end{equation}

maps $\Stz(X)$ to $\Stz(\Xbar)$. It satisfies the relation

\begin{equation}\label{equivarF}\F\left(f^{a^{-1}}\right)=\left|\det(a)\right|^{-(n+1)}\: ^{a}\left(\F(f)\right).\end{equation}

\subsection{Normalization}\label{normalsubsec}

We can now establish the main result of the article, relating the Fourier transform of the previous paragraph to the standard intertwining operator at the level of the $\Cs$-algebraic universal principal series.

\begin{theorem}\label{thmnormal}
The transform $\F$ between $\Stz(X)$ and $\Stz\left(\Xbar\right)$ extends to a unitary operator of Hilbert modules \[\U:\E(G/N)\longrightarrow\E(G/\Nbar)\] that normalizes $\I$ in the sense of Definition \ref{normaldef}. The corresponding normalizing distribution is defined by \[\gamma_n(a)=\left|\det(a)\right|^{\frac{1-n}{2}}e^{-2i\pi\re \left(\mathrm{Tr}(a^{-1})\right)}\] for $a$ in $L$.
\end{theorem}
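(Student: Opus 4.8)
The plan is to verify the three requirements of Definition \ref{normaldef} — unitarity, $G$-equivariance, and the factorization $\I\circ C_{\gamma_n}=\F$ on a dense subspace — and to check that $\F$ is indeed bounded for the Hilbert-module norms so that it extends to all of $\E(G/N)$. The natural dense subspace to work on throughout is $\Stz(X)$, which we may use by Proposition \ref{charactE(G/N)}; the asymptotic decay noted in Remark \ref{remE(G/N)} is what makes $\F$ well-behaved there.

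First I would establish $G$-equivariance and the inner-product identity simultaneously, since both are essentially Plancherel-type computations on $X\cong F^{n(n+1)}$. Equivariance under the $G$-action \eqref{actionG}, \eqref{actionGbar} amounts to the translation/modulation interplay of the Euclidean Fourier transform: writing $g.f(x)=f(g^{-1}x)$, a change of variables $x\mapsto gx$ in \eqref{F} (the measure $dx$ being $G$-invariant by Proposition \ref{G/N}) turns the phase $\re\mathrm{Tr}(y g^{-1}x)\cdot$ into $\re\mathrm{Tr}((yg^{-1})x)$, which is exactly $g.(\F f)(y)=\F f(yg)$. For the module inner product, I would show $\scal{\F f}{\F h}_{\Xbar}=\scal{f}{h}_X$ as functions on $L$: unwinding \eqref{scalbar} and \eqref{scal}, both sides are, up to the determinant factors, $L^2$-pairings of $\F f$ against a translate-dilate of $\F h$; using \eqref{equivarF} to move the $L$-action across $\F$ and then the Parseval identity $\scal{\F f}{\F h}_{L^2}=\scal{f}{h}_{L^2}$, the $\left|\det(a)\right|$ powers from \eqref{equivarF}, \eqref{scal}, \eqref{scalbar} must cancel — this bookkeeping is the place to be careful, but it is forced and should come out clean. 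Boundedness, hence the extension to a unitary $\U$, follows from this isometry property on the dense subspace $\Stz(X)$ together with surjectivity of $\F:\Stz(X)\to\Stz(\Xbar)$ (stated in Section 3.2) and density of $\Stz(\Xbar)$ in $\E(G/\Nbar)$.

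Next I would identify the normalizing distribution by computing $\I\circ C_{\gamma_n}$ directly from the kernel description. By \eqref{convolT}, $C_{\gamma_n}(f)(x)=\int_{\mathrm{GL}(n,F)}f(xa^{-1})\left|\det(a)\right|^{-\frac{n+1}{2}}\gamma_n(a)\,d^\times a$, and then by \eqref{defIker},
\[
\I\big(C_{\gamma_n}(f)\big)(y)=\int_X\int_{\mathrm{GL}(n,F)}\delta(yx-I_n)\,f(xa^{-1})\left|\det(a)\right|^{-\frac{n+1}{2}}\gamma_n(a)\,d^\times a\,dx.
\]
The strategy is to substitute $x\mapsto xa$ and use the homogeneity \eqref{kIhomog} (equivalently \eqref{LmesX} for the measure) to replace $\delta(yx-I_n)$ by $\left|\det(a)\right|^{-(n+1)}\delta(yx-a)$ after the appropriate change of variables; collecting all the $\left|\det(a)\right|$ powers — from \eqref{LmesX}, from \eqref{convolT}, from \eqref{kIhomog}, and from the definition of $\gamma_n$ — should reduce the double integral to $\int_{X} f(x)\big(\int \delta(yx-a)\left|\det(a)\right|^{\ast}e^{-2i\pi\re\mathrm{Tr}(a^{-1})}d^\times a\big)dx$, and the inner integral collapses on the support $a=yx$ to produce the phase $e^{-2i\pi\re\mathrm{Tr}((yx)^{-1})}$ — but $(yx)^{-1}$ evaluated against the Dirac, combined with the inversion inside the exponential, is exactly the phase $e^{-2i\pi\re\mathrm{Tr}(yx)}$ appearing in \eqref{F} once one checks that the exponent $\frac{1-n}{2}$ in $\gamma_n$ is precisely what makes every remaining determinant power cancel. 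This is the computation that pins down $\gamma_n$, and the exponent $\frac{1-n}{2}$ is the telltale sign that it is the engine of the proof.

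The main obstacle, as usual with these arguments, will be the rigorous handling of the Dirac kernel $k_{\I}(x,y)=\delta(yx-I_n)$ together with the non-absolutely-convergent standard integral: $\I$ itself does not extend to the Hilbert modules (as recalled after \eqref{doublefib}), so the manipulations above are only literally valid when paired against suitable test functions, and one must argue that $\I\circ C_{\gamma_n}$ — the composite, in which the convolution regularizes — does make sense on $\Stz(X)$ and agrees there with $\F$. I would handle this by interpreting all identities distributionally and then invoking the already-proved fact that $\F$ maps $\Stz(X)$ into $\Stz(\Xbar)\subset\E(G/\Nbar)$ to conclude that the composite lands in the right module; the equality $\I\circ C_{\gamma_n}=\F$ need only be checked on the dense subspace $\Stz(X)$ as \eqref{normaldef}($ii$) requires. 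Once that identity holds on $\Stz(X)$ and $\F$ has been shown unitary and $G$-equivariant by the first two steps, the theorem follows.
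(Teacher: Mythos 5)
Your overall route is the same as the paper's: a Plancherel-type isometry and equivariance argument on $\Stz(X)$ (using Proposition \ref{charactE(G/N)}), extension by density, and then a kernel computation with the substitution $x\mapsto xa$ and a Dirac collapse to identify $\gamma_n$. The problem is that the two places you defer with ``should come out clean'' and ``once one checks'' are exactly where the content of the theorem lies, and your sketch of the second one is internally inconsistent. After substituting $x\mapsto xa$ (Jacobian $\left|\det a\right|^{n+1}$, by \eqref{LmesX}), the kernel $\delta(yxa-I_n)$, viewed as a distribution in $a\in\mathrm{M}_n(F)$, is supported at $a=(yx)^{-1}$, not at $a=yx$ as you assert; if it really sat at $a=yx$, the factor $e^{-2i\pi\re\mathrm{Tr}(a^{-1})}$ in $\gamma_n$ would produce the wrong phase $e^{-2i\pi\re\mathrm{Tr}((yx)^{-1})}$, and no ``inversion'' rescues it -- it is precisely because the delta sits at the inverse point that the phase of \eqref{F} comes out. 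Moreover, collapsing that delta contributes the homogeneity factor of the Dirac on $\mathrm{M}_n(F)$, which scales by $\left|\det\right|^{-n}$ (not the $X$-measure factor $\left|\det\right|^{-(n+1)}$), and converting $d^\times a=\left|\det a\right|^{-n}da$ contributes a further power. These factors are exactly what determine the exponent of $\left|\det a\right|$ in $\gamma_n$, so ``every remaining determinant power cancels'' is the statement to be proved, not a bookkeeping remark: the paper does this explicitly, via the extra substitution $a\mapsto a^{-1}$ and the homogeneity \eqref{kIhomog}, and you must carry out the analogous computation in full. I strongly recommend you test your final exponent on the case $n=1$ (where $X=F^2$ and the whole computation is two lines); as written, your argument does not pin down the power of $\left|\det a\right|$, and that is the whole of part ($ii$) of Definition \ref{normaldef}.

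The unitarity half has the same soft spot. The cancellation in $\scal{\F f}{\F h}_{\Xbar}=\scal{f}{h}_X$ is not ``forced'': if you insert \eqref{equivarF} with the exponent exactly as displayed, the determinant powers do not cancel against those of \eqref{scal} and \eqref{scalbar}; you need to rederive the equivariance by performing the substitution in \eqref{F}, which gives $\F\left(f^{a^{-1}}\right)=\left|\det a\right|^{n+1}\,{^a}\!\left(\F f\right)$, and it is this identity that makes the Parseval computation close up (it is also what the paper's displayed computation actually uses). Finally, for the extension to be a unitary of Hilbert modules you need $\Csr(L)$-linearity, not only preservation of the module inner product on a dense subspace; this follows from the $L$-equivariance $\F(f.a)=\F(f).a$ (or by polarization from the inner-product identity, since $\Stz(X)$ is stable under the $L$-action), but it should be stated -- the paper verifies it explicitly as its first step.
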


\begin{proof}
The $G$-equivariance is a direct consequence the $G$-invariance of the measure. To establish that $\F$ is $C_c(L)$-linear, it is enough to show that it is $L$-equivariant.

In view of Proposition \ref{charactE(G/N)} and Remark \ref{remE(G/N)}, we work at the level of Schwartz functions on $X$. For $f$ in $\Stz(X)$ one has, according to \eqref{actionL},
\[\begin{array}{cclr}\F(f.a) & = & \F\left(\left|\det(a)\right|^{-\frac{n+1}{2}}f^{a^{-1}}\right)&\\
& = & \left|\det(a)\right|^{\frac{n+1}{2}}.\;{^a}\F(f) & \text{by \eqref{equivarF}}\phantom{.}\\
& = & \F(f).a & \text{by \eqref{actionLbar}}.
\end{array}\]
The fact that $\F$ preserves the $\Csr(L)$-valued inner products also follows from a straightforward computation relying on its equivariance properties, namely \begin{eqnarray*}\scal{\F (f)}{\F (f)}_{\Xbar}(a) & = & \left|\det(a)\right|^{-\frac{n+1}{2}}\scal{\F (f)}{{^{a^{-1}}}\F (f)}_{L^2}\\
& = & \left|\det(a)\right|^{-\frac{n+1}{2}}\scal{\F (f)}{\left|\det(a)\right|^{(n+1)}\F \left(f^a\right)}_{L^2}\\
& = & \left|\det(a)\right|^{\frac{n+1}{2}}\scal{\F (f)}{\F \left(f^a\right)}_{L^2}\\
& = & \left|\det(a)\right|^{\frac{n+1}{2}}\scal{f}{f^a}_{L^2} = \scal{f}{f}_X(a),\end{eqnarray*}
the last line relying on the Plancherel equality for square integrable functions.

Let now $\gamma$ be a function on $L$. According to \eqref{convolT}, the composition $\I\circ C_\gamma$ acts on a function $f$ by \begin{eqnarray*}\left(\I\circ C_\gamma\right)(f)(y)&=&\int_X \int_{\mathrm{GL}(n,F)}k_{\I}(x,y)f\left(xa^{-1}\right)\left|\det(a)\right|^{-\frac{n+1}{2}}\gamma(a)\,d^\times a\,dx\\
&\overset{x\leftrightarrow xa}{=}&\int_X \int_{\mathrm{GL}(n,F)}k_{\I}(xa,y)f(x)\left|\det(a)\right|^{\frac{n+1}{2}}\gamma(a)\,d^\times a\,dx
\end{eqnarray*}

Therefore, the kernel $k_\circ$ defining the composition can be written as

\begin{eqnarray*}
k_\circ(x,y)&=&\int_{\mathrm{GL}(n,F)}k_{\I}(xa,y)\left|\det(a)\right|^{\frac{n+1}{2}}\gamma(a)\,d^\times a\\
&\overset{\quad a\leftrightarrow a^{-1}}{=}&\int_{\mathrm{GL}(n,F)}k_{\I}(xa^{-1},y)\left|\det(a)\right|^{-\frac{n+1}{2}}\gamma(a^{-1})\,d^\times a\\
&=&\int_{\mathrm{GL}(n,F)}\delta(yx-a)\left|\det(a)\right|^{\frac{n+1}{2}}\gamma(a^{-1})\,d^\times a\qquad\text{by \eqref{kIhomog}}\\
&=&\int_{\mathrm{GL}(n,F)}\delta(yx-a)\left|\det(a)\right|^{\frac{1-n}{2}}\gamma(a^{-1})\,da
\end{eqnarray*}

Comparing this last expression with the kernel of the Fourier transform of the previous paragraph \[k_{\F}(x,y)=\int_{\mathrm{M}_n(F)}\delta(yx-a)e^{-2i\pi\re \left(\mathrm{Tr}(a)\right)}da,\]
it is easily seen that the result follows from choosing \[\gamma_n(a)=\left|\det(a)\right|^{\frac{1-n}{2}}e^{-2i\pi\re \left(\mathrm{Tr}(a^{-1})\right)}.\]
\end{proof}

Pursuing the comparison with the case of $\mathrm{SL}(2)$ suggested in Remark \ref{remSL2}, we observe that this result is consistent with the one obtained in the case $n=1$ in \cite[Theorem 3.2]{PC*entrelacSL2}, where the composition $\I_w\circ\F_w$ was proved to be equal to a convolution operator by a distribution analogous to $\gamma_1$.

\section{The non-archimedean case}\label{secnonarchi}

The purpose of this last paragraph is to describe how the construction and the main result presented above extend to the non-archimedean case.

Let $F$ be a non-archimedean local field, that is a finite extension of $\Q_p$ or $\Fp((t))$. We fix a non-trivial continuous character $\chi:F\longrightarrow\mathrm{U}(1)$ and a Haar measure $dx$ on $F$ so that the Fourier transform $\F_\chi$ defined by \[\F_\chi(f)(y)=\int_F f(x)\chi(xy)\,dx\] is an isometry of $L^2(F)$.

A norm $\left|\cdot\right|_F$ is fixed on $F$ that coincides with the modular function related to the Haar measure (see \cite{WeilBNT}). It is specified by the relation $\left|\pi\right|_F=q^{-1}$ where $\pi$ is a uniformizer and $q$ is the cardinality of the residual field.

The structure theory of the group $G_F$ remains the same in this context. The definitions of functional spaces in the case of a totally discontinuous space $T$ need to be adapted as follows: $C_c(T)$ still denotes continuous functions with compact support on $T$, while $C^\infty_c(T)$ (and $\Stz(T)$) denote locally constant compactly supported functions on $T$. Up to these modifications and replacing absolute values by $\left|\cdot\right|_F$ everywhere, the Hilbert module construction of Section \ref{E(G/N)sec} providing the reduced $\Cs$-algebraic universal principal series can be carried out in the same way.

Finally, the proof of the analogue of Proposition \ref{charactE(G/N)} is straightforward and the normalisation process can be achieved by defining the Fourier transform $\F$ between $\Stz(X)$ and $\Stz(\Xbar)$ by
\begin{equation}\label{U_n_chi}\F f(y)=\int_Xf(x)\chi\left(\mathrm{Tr}(yx)\right)\,dx\end{equation} then extending it as an operator of Hilbert modules $\E(G/N)\longrightarrow\E(G/\Nbar)$ as in Theorem \ref{thmnormal}. The normalising function $\gamma_n$ is then given by \begin{equation}\label{gamma_n_chi}\gamma_n(a)=\left|\det(a)\right|_F^{\frac{1-n}{2}}\chi \left(\mathrm{Tr}(a^{-1})\right)\end{equation} for any $a$ in $L$.

\subsection*{Concluding remarks}

The point of view on principal series and intertwining operators presented here relates to other work on the subject. More precisely, the distribution $\gamma_n$ in \eqref{gamma_n_chi} allows to recover local $\gamma$ factors introduced by R. Godement and H. Jacquet in \cite{GodJac} and the Hilbert module operator obtained from \eqref{U_n_chi} by Theorem \ref{thmnormal} is a way of considering simultaneously a whole family of the normalised intertwiners studied by F. Shahidi in \cite{Shahidi84}. The connection with these results will be studied in detail in future work.

\newpage

\bibliographystyle{amsalpha}
\bibliography{biblio}

\end{document}